\definecolor{halfgray}{gray}{0.55} 
\definecolor{webgreen}{rgb}{0,0.5,0}
\definecolor{webbrown}{rgb}{.6,0,0} \hypersetup{%
\newtheorem{theorem}{Theorem}[section]
\newtheorem{lemma}[theorem]{Lemma}
\newtheorem{proposition}[theorem]{Proposition}
\theoremstyle{definition}
\newtheorem{remark}[theorem]{Remark}
\newtheorem{claim}[theorem]{Claim}
\newcommand{\field}[1]{\mathbb{#1}}
\newcommand{\R}{\field{R}}
\newcommand{\Z}{\field{Z}}
\newcommand{\T}{\field{T}}
\newcommand{\cA}{\mathcal A}
\newcommand{\cF}{\mathcal{F}}
\newcommand{\cU}{\mathcal{U}}
\newcommand{\cV}{\mathcal{V}}
\newcommand{\cT}{\EuScript T}
\newcommand{\euP}{\EuScript P}
\newcommand{\eg}{{\it e.g., } }
\renewcommand{\phi}{\varphi}
\newcommand{\eps}{\varepsilon}
\renewcommand{\|}{\,\Vert\,}
\begin{document}
\baselineskip=14pt

\title{Smooth rigidity for codimension one Anosov flows}
\author {Andrey Gogolev and Federico Rodriguez Hertz}\thanks{The authors were partially supported by NSF grants DMS-1955564 and DMS-1900778, respectively}

 \address{Department of Mathematics, The Ohio State University,  Columbus, OH 43210, USA}
\email{gogolyev.1@osu.edu}

\address{Department of Mathematics, The Pennsylvania State University,  University Park, PA 16802, USA}
\email{hertz@math.psu.edu}

\begin{abstract} 
  \begin{sloppypar}
  We introduce the matching functions technique in the setting of Anosov flows. Then we observe that simple periodic cycle functionals (also known as temporal distances) provide a source of matching functions for conjugate Anosov flows. For conservative codimension one Anosov flows $\phi^t\colon M\to M$, $\dim M\ge 4$, these simple periodic cycle functionals are $C^1$ regular and, hence, can be used to improve regularity of the conjugacy. Specifically, we prove that a continuous conjugacy must, in fact, be a $C^1$ diffeomorphism for an open and dense set of codimension one conservative Anosov flows.

  \end{sloppypar}
\end{abstract}
\maketitle

\section{Introduction}

Let $M$ be a closed smooth Riemannian manifold. Recall that a smooth flow $\phi^t\colon M\to M$ is called {\it Anosov} if the tangent bundle admits a $D\phi^t$-invariant splitting $TM=E^s\oplus X\oplus E^u$, where $X$ is the generator of $\phi^t$, $E^s$ is uniformly contracting and $E^u$ is uniformly expanding under $\phi^t$. Basic examples of Anosov flows are geodesic flows in negative curvature and suspension flows of Anosov diffeomorphisms. If $\dim E^s=1$ then $\phi^t$ is called a {\it codimension one} Anosov flow.

Anosov flows $\phi_1^t\colon M\to M$ and $\phi_2^t\colon M\to M$ are called {\it orbit equivalent} if there exists a homeomorphism $h\colon M\to M$ which sends orbits of $\phi_1^t$ to the orbits of $\phi_2^t$ preserving the time direction. A much stronger equivalence property for flows is conjugacy. Flows $\phi_1^t$ and $\phi_2^t$ are {\it conjugate} via a homeomorphism $h$ if $h\circ \phi_1^t=\phi_2^t\circ h$ for all $t\in \R$. 

Anosov's structural stability asserts that if $\phi_1^t$ is a transitive Anosov flow and $\phi_2^t$ is a sufficiently small $C^1$ perturbation of $\phi_1^t$ then $\phi_2^t$ is orbit equivalent to $\phi_1^t$. An orbit equivalence $h$ can be improved to a conjugacy by adjusting along the flow lines if and only if the periods of corresponding periodic orbits coincide, that is, $per_{\phi_1^t}(\gamma)=per_{\phi_2^t}(h(\gamma))$ for all periodic orbits $\gamma$~\cite[Theorem 19.2.9]{KH}. Indeed, one can begin with orbit equivalence, then, if periods match, the derivative of the orbit equivalence along the flow is cohomologous to constant 1 by the Livshits theorem. Then one use the transfer function to adjust the orbit equivalence into another orbit equivalence whose derivative along the flow is 1, which is then a conjugacy. According to~\cite{FO} this application of the Livshits theorem is due to A. Katok.

It is also well-known that a conjugacy between Anosov flows must be bi-H\"older continuous. Generally speaking, a better regularity cannot be expected as can be seen from the example of constant roof suspensions of two Anosov diffeomorphisms, which are not $C^1$ conjugate, but merely H\"older conjugate. However, it is the authors' belief that aside from this special case a lot more rigidity can be expected. In the setting of 3-dimensional contact Anosov flows Feldman and Ornstein proved that any continuous conjugacy is, in fact, smooth~\cite{FO}. We proceed to present more evidence to support our belief.

\begin{theorem}
\label{thm_flows}
Let $\phi_1^t\colon M\to M$ and $\phi_2^t\colon M\to M$ be volume preserving Anosov flows, which are conjugate via a homeomorphism $h\colon M\to M$. Assume that $\dim M=4$ and that there exists a periodic point $p=\phi_1^T(p)$ such that the linearized return map $D\phi_1^T\colon T_pM\to T_pM$ has a pair of (non-real) complex conjugate eigenvalues. Then at least one of the following holds
\begin{enumerate}
\item $\phi_1^t$ and $\phi_2^t$ are constant roof suspensions over Anosov diffeomorphisms;
\item conjugacy $h$ is $C^\infty$ smooth;
\end{enumerate}
\end{theorem}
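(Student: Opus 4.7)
I would follow the matching functions strategy announced in the abstract. \emph{Stage 1 (matching).} Since $h$ conjugates the flows, it maps periodic orbits to periodic orbits and preserves their periods. Via Livshits's theorem applied to the Hölder cocycle arising from temporal displacements along stable-unstable-flow rectangles, one obtains that the simple periodic cycle functionals $\tau_i$ of $\phi_i^t$ satisfy the matching relation $\tau_1(x,y) = \tau_2(h(x), h(y))$, first on periodic rectangles and then, by density and continuity, on all sufficiently small rectangles. This functional equation is the source of all extra regularity for $h$.

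\emph{Stage 2 (regularity on leaves, then Journé).} In the codimension-one setting with $\dim M = 4$, strong unstable leaves are $2$-dimensional. By the main technical theorem of the paper, for a conservative codim-one Anosov flow in $\dim \ge 4$, $\tau_i$ restricted to a $2$D unstable rectangle is $C^1$. Combining this with the matching equation and the $C^1$ regularity of strong stable/unstable holonomies in the conservative codim-one case, one can express $h$ along $W^u_1$ as a composition of $C^1$ maps, yielding $h|_{W^u_1} \in C^1$. A parallel (and simpler) argument along the $1$-dim strong stable leaves produces $h|_{W^s_1} \in C^1$. By Journé's regularity lemma applied to the strong stable and strong unstable foliations (plus the flow direction), one upgrades this to $h \in C^1(M,M)$. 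To bootstrap to $C^\infty$, differentiate the conjugation equation $h\circ\phi_1^t = \phi_2^t\circ h$, match the derivative cocycle at periodic orbits (leveraging volume preservation to control Jacobian information), and iterate a smooth Livshits argument in the usual way.

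\emph{Stage 3 (role of complex eigenvalues and dichotomy).} The hypothesis that $D\phi_1^T\colon T_pM \to T_pM$ has a pair of complex conjugate eigenvalues prevents the return map $D\phi_1^T|_{E^u_p}$ from admitting a $D\phi_1^T$-invariant $1$-dimensional sub-bundle, so the matching functional is non-degenerate across the full $2$D unstable direction near $p$. Propagating this non-degeneracy by density of periodic orbits and continuity, the regularity argument of Stage 2 produces option (2). If, on the contrary, the unstable bundle splits globally into invariant $1$-dimensional sub-foliations, classification results for codimension-one Anosov flows force $\phi_1^t$ to be a constant roof suspension over an Anosov diffeomorphism — option (1) — and $\phi_2^t$ must be one as well since it is conjugate to $\phi_1^t$.

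The central obstacle, which is the core technical contribution of the paper, is establishing the $C^1$ regularity of the simple periodic cycle functional on $2$D unstable rectangles; this seems to rest crucially on the interplay between volume preservation and the codim-one structure, which together force enough smoothness of the weak stable foliation for the functional to be $C^1$ rather than merely Hölder. A secondary obstacle is carefully executing the dichotomy so that the only alternative to $C^\infty$ conjugacy is exactly the constant-roof-suspension option (1), and not some weaker degeneracy.
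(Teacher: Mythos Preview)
Your overall architecture is close to the paper's, but the dichotomy in Stage~3 is misidentified, and this is a genuine gap. The complex eigenvalue hypothesis is always in force, so your ``on the contrary'' case (the unstable bundle splitting into invariant line bundles) is already impossible at $p$; it cannot be the source of option~(1). The actual bifurcation is whether \emph{all} simple PCFs $\rho^1_{a,b}$ vanish identically. If they do, then by definition the stable holonomy preserves strong unstable leaves, so $W^s_1$ and $W^u_1$ are jointly integrable; Plante's theorem then forces $\phi_1^t$ to be a suspension, and a further step (either Plante again via $H_1(M,\R)\simeq\R$, or a direct cocycle computation) shows the roof is constant. This is how option~(1) arises, and it can genuinely occur under the complex eigenvalue hypothesis (take a constant-roof suspension of an Anosov diffeomorphism of $\T^3$ whose unstable return map has a complex eigenvalue pair). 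If instead some PCF is non-vanishing, one gets a point where $D\rho^1_{a,b}\neq 0$, hence $m_1\le 1$; the complex eigenvalue hypothesis then rules out $m_1=1$ (no invariant line in $E^u_1(p)$), forcing $m_1=0$, and only now do two matching pairs with independent differentials invert locally to exhibit $h^{-1}|_{W^u_1}$ as $C^1$.

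Two smaller points. First, the matching relation $\rho^1_{a,b}=\rho^2_{h(a),h(b)}\circ h$ does not require Livshits: it is immediate from the definition of the PCF and the fact that $h$ sends the invariant foliations of $\phi_1^t$ to those of $\phi_2^t$. Second, there is no ``parallel (and simpler)'' PCF argument along the one-dimensional stable leaves; the PCFs are functions on \emph{unstable} leaves. The paper instead argues that $C^1$ regularity of $h$ along $W^u_1$ gives matching of unstable Jacobians at periodic orbits, volume preservation then forces matching of stable Jacobians, and de la Llave's one-dimensional argument yields smoothness of $h$ along $W^s_1$, after which Journ\'e applies.
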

The above theorem applies to abstract 4-dimensional volume preserving Anosov flows. Recall that Verjovsky conjecture states that any codimension one Anosov flow on a manifold of dimension greater than three is orbit equivalent to a suspension flow of a toral automorphism. If Verjovsky conjecture is true then $\phi_i^t$ must, in fact, be suspensions of Anosov diffeomorphisms of $\T^3$. We also prove the following result in any dimension $\ge4$.
\begin{theorem}
\label{thm_flows2}
Let $M$ be a closed manifold of dimension at least 4. Denote by $\cU$ the space of codimension one  volume preserving Anosov flows on $M$. Then there exists a $C^1$ open and $C^\infty$ dense subset $\cV\subset\cU$ such that if $\phi_1^t\in\cV$ and $\phi_2^t\in \cU$ are conjugate via a homeomorphism $h\colon M\to M$ then $h$ is, in fact, a $C^1$ diffeomorphism.
\end{theorem}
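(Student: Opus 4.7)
The plan is to implement the matching-functions strategy outlined in the abstract. I would take the generic set $\cV$ to consist of flows in $\cU$ possessing a periodic orbit whose linearized Poincar\'e return map on $E^u$ satisfies a suitable open and dense spectral condition (analogous to the complex conjugate eigenvalue condition of Theorem \ref{thm_flows}, or a mild non-resonance condition). This is a single-orbit, spectral condition, so $C^1$ openness is immediate, and $C^\infty$ density follows from volume-preserving perturbations supported in a thin flow box around any periodic orbit. The role of this condition is to guarantee that the matching functions constructed below have nondegenerate first-order data transverse to the invariant foliations at the selected periodic orbit.

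Once $\cV$ is fixed, for any conjugacy $h$ between $\phi_1^t \in \cV$ and $\phi_2^t \in \cU$, the classical theory gives that $h$ is bi-H\"older, carries the weak and strong stable/unstable foliations of $\phi_1^t$ to those of $\phi_2^t$, and preserves periodic orbit periods. A standard Livshits-style leafwise argument then upgrades $h$ to a smooth diffeomorphism on every strong stable and strong unstable leaf, since along a single leaf one can match the hyperbolic contraction/expansion cocycles pointwise via periodic data and integrate.

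The central step is to introduce the simple periodic cycle functional for $\phi_i^t$. For points $x,y$ sufficiently close, the four invariant foliations $W^s, W^u, W^{cs}, W^{cu}$ close up a local rectangle up to a flow-time discrepancy $\Delta_i(x,y)$, the \emph{temporal distance function}. Since $h$ preserves all four foliations and the flow parametrization, the matching identity $\Delta_2\bigl(h(x),h(y)\bigr) = \Delta_1(x,y)$ is automatic. The nontrivial claim, which the abstract flags as the key observation, is that in the conservative codimension one setting with $\dim M\ge 4$ the function $\Delta_i$ is jointly $C^1$. Combining the $C^1$ matching identity with the leafwise smoothness of $h$ allows one to exhibit a genuine derivative for $h$ at the selected periodic point in the directions transverse to the leaves, provided the generic condition on $\cV$ ensures the relevant first-order data of $\Delta_i$ is nondegenerate. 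A topological transitivity/propagation argument of the form ``transport the $C^1$ germ along stable and unstable holonomies which are themselves smooth'' then spreads $C^1$ regularity from the periodic orbit to all of $M$.

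The main obstacle is precisely the $C^1$ regularity of the temporal distance function $\Delta_i$. In general, temporal distance functions for Anosov flows are only H\"older; here $C^1$ regularity relies on the codimension one assumption, which makes $E^s$ a one-dimensional, hence $C^{1+\alpha}$, line field and the weak unstable foliation a $C^1$ codimension one foliation for conservative flows, together with volume preservation. A secondary difficulty is to extract the full transverse derivative of $h$ from the scalar $C^1$ functional $\Delta_i$ using only the generic spectral data on $\cV$; this is where the dimension hypothesis $\dim M\ge 4$ enters, as the unstable distribution must be at least two-dimensional in order for the derivatives of $\Delta_i$ along $W^u$-holonomy to probe enough independent transverse directions.
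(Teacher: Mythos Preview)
Your proposal has the right ingredients --- simple PCFs (temporal distances) as matching functions, their $C^1$ regularity from the codimension-one conservative hypothesis, and a generic condition on $\cV$ --- but the architecture is off in two places, and one of them is a genuine gap.

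\textbf{First gap: leafwise smoothness along $W^u$ is not ``standard''.} You write that ``a standard Livshits-style leafwise argument then upgrades $h$ to a smooth diffeomorphism on every strong stable and strong unstable leaf.'' This is true on the one-dimensional stable leaves (de la Llave), but it is \emph{false} on the higher-dimensional unstable leaves: matching Jacobian periodic data does not yield leafwise $C^1$ regularity when $\dim E^u\ge 2$. Obtaining $C^1$ regularity of $h$ along $W^u$ is precisely the main content of the proof, and it is what the matching functions are for. The paper packages this into a Subbundle Theorem: the kernels of the differentials of all matching functions cut out an invariant $C^1$ subbundle $E_1\subset E_1^u$, and $h$ is $C^1$ along $W^u$ transversely to the foliation tangent to $E_1$. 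The generic condition on $\cV$ is then used to force $\dim E_1=0$, which gives $C^1$ regularity along all of $W^u$. Only \emph{after} this does one invoke volume preservation to match stable Jacobians and run de la Llave plus Journ\'e.

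\textbf{Second gap: a single-orbit spectral condition cannot define $\cV$ when $\dim E^u\ge 3$.} In Theorem~\ref{thm_flows} the complex-eigenvalue condition works because a $2\times 2$ real matrix with non-real spectrum has \emph{no} proper invariant subspaces, so $E_1(p)$ is forced to be $0$. For $\dim E^u\ge 3$ every linear return map has proper invariant subspaces, so a spectral condition at one periodic orbit cannot by itself kill $E_1$. The paper's $\cV$ is defined by a \emph{heteroclinic} condition: one fixes periodic points $p,q$ and a heteroclinic point $r\in W^s_{loc}(p)\cap W^{0u}(q)$, requires the return map at $p$ to have only finitely many invariant subspaces $F_\alpha\subset E^u(p)$, and then requires that the image $D\,Hol^s_{r,p}(E^u(r))$ of the stable holonomy contain none of the $F_\alpha$. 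The point is that $E_1$ integrates jointly with $W^s$ (this comes out of the Subbundle Theorem), so stable holonomy would have to send $E_1(r)$ to $E_1(p)=F_\alpha$; the condition on $\cV$ makes this impossible. Density of this condition is nontrivial: it is achieved by $C^\infty$-small \emph{reparametrizations} supported near $f(r)$, which leave $E^u(r)$ and the $F_\alpha$ fixed but move $D\,Hol^s_{r,p}$ through an open set in the relevant Grassmannian (this is the content of the paper's Lemma~\ref{lemma_main} and Claim~\ref{claim_44}). A flow-box perturbation at a single periodic orbit will not do this.
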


\begin{remark}
In both Theorem~\ref{thm_flows} and Theorem~\ref{thm_flows2}, the assumption on the conjugacy can be weakened by only assuming that time-1 maps of the flows are conjugate. To see that recall that any Anosov flow which is not a constant roof suspension is topologically weakly mixing~\cite{Pl}. Hence, we can assume that $\phi^t_1$ is topologically weakly mixing. Then it is not hard to check that orbits whose period is irrational are dense in $M$ (see, \eg~\cite[Lemma~2.2]{BG}). The restriction of the conjugacy for time-1 maps on these irrational periodic orbits must be conjugacy for the flows, because the time-1 map orbits are dense in the irrational periodic orbits. From here, we see the time-1 map conjugacy is a conjugacy of flows on a dense set and hence is, in fact, conjugacy of flows.
\end{remark}

\begin{remark}
Analogous results, with the same proofs, also hold in the setting of partially hyperbolic diffeomorphisms. Namely, one has to consider volume preserving partially hyperbolic diffeomorphisms with one-dimensional stable subbundle, one-dimensional isometric center subbundle and a higher dimensional ($\ge 2$) unstable subbundle. Then one can conclude regularity of the conjugacy for an open and dense set of such diffeomorphisms. It is an interesting problem to generalize to the setting with higher dimensional isometric center subbundle. We would like to thank Danijela Damjanovi\'c for this remark.
\end{remark}

The proofs of both theorems rely on matching functions along the higher dimensional invariant foliation. We have previously developed such matching functions technique for expanding maps~\cite{GRH} and for codimension one Anosov diffeomorphisms~\cite{GRH3}. By consistently working in $C^{1+\eps}$ category one can, in fact, conclude that the conjugacy in Theorem~\ref{thm_flows2} is a $C^{1+\eps}$ diffeomorphism for some small $\eps>0$. In the setting of Theorem~\ref{thm_flows}, the presence of the ``conformal'' periodic unstable leaf allows to employ rather standard bootsrtap arguments to upgrade the regularity of the conjugacy from $C^1$ to $C^\infty$. The authors do not know how to further bootstrap regularity of the conjugacy in the setting of Theorem~\ref{thm_flows2}.

Finally, we would like to point out that, in the case when the flows are suspensions, the methods of~\cite{GRH3} become directly applicable. Specifically, recall the following result from~\cite{GRH3}.

\begin{theorem}[Theorem 1.7, \cite{GRH3}]
\label{thm_codim1}
Let $L\colon\T^d\to \T^d$ be a generic hyperbolic automorphism with one dimensional stable subspace. Assume that
$$
(\log\mu)^2-(\log\xi_l)^2>\log\mu(\log\xi_l-\log\xi_1)
$$
where $\mu^{-1}$ is the absolute value of the stable eigenvalue, $\xi_1$ is the smallest absolute value of the eigenvalues which are greater than 1 and $\xi_l$ is the largest absolute value of the eigenvalues of $L$.

Then there exists a $C^1$ neighborhood $\cU$ of $L$ in $\textup{Diff}^r(\T^d)$, $r\ge 3$, and a $C^r$-dense $C^1$-open subset $\cV\subset\cU$ such that if $f_1,f_2\in\cV$ have matching Jacobian periodic data then $f_1$ and $f_2$ are $C^{1+\eps}$ conjugate for some $\eps>0$.
\end{theorem}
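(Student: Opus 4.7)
The plan is to bootstrap the regularity of any topological conjugacy from H\"older to $C^{1+\eps}$ in three stages, following the matching functions strategy of \cite{GRH3}. First, since $\cU$ is a $C^1$-neighborhood of the Anosov automorphism $L$, structural stability gives a H\"older conjugacy $h\colon\T^d\to\T^d$ with $h\circ f_1=f_2\circ h$, obtained by composing the individual conjugacies $f_i\sim L$.

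Next, I would upgrade $h$ to $C^{1+\eps}$ along stable leaves. Matching of Jacobian periodic data together with Livshits's theorem produces a H\"older transfer function for the log-stable-Jacobian cocycle, and because $E^s$ is one-dimensional a de la Llave--Marco--Moriy\'on argument then shows $h|_{W^s}$ is uniformly $C^{1+\eps}$ leafwise, with uniform bounds across the foliation.

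The main new ingredient is obtaining $C^{1+\eps}$ regularity along the higher-dimensional unstable foliation. Here the matching functions technique enters: to pairs of nearby orbits differing by a short unstable segment one associates simple periodic cycle functionals, which can be read off from the Jacobian periodic data and whose matching under $h$ is therefore automatic from the hypothesis. The eigenvalue inequality $(\log\mu)^2-(\log\xi_l)^2>\log\mu(\log\xi_l-\log\xi_1)$ is exactly the bunching condition needed to guarantee that these functionals are $C^1$ smooth along unstable leaves. The set $\cV\subset\cU$ is then defined as the locus where a finite collection of such cycle functionals forms a non-degenerate (locally injective) system along a reference orbit; $C^1$-openness is clear from continuous dependence of the functionals on $f$, and $C^r$-density follows from a local perturbation argument on Jacobian periodic data at a single orbit. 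Non-degeneracy together with matching of the functionals forces $h$ to be $C^{1+\eps}$ along unstable leaves, and then Journ\'e's regularity lemma assembles the stable and unstable directions into a global $C^{1+\eps}$ regularity for $h$.

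The main obstacle is the third step: establishing $C^1$ regularity of the cycle functionals and exploiting it to transfer smoothness to $h$ along the unstable direction. This reduces to controlling the error in approximating the functionals by truncated periodic-orbit sums, and the stated bunching inequality is precisely what is required so that exponential contraction along $E^s$ dominates the growth of derivatives along $E^u$, preventing a loss of regularity between the H\"older conjugacy and the $C^1$ structure of the cycle functionals. Pinning down the exact generic condition defining $\cV$, and checking both the $C^1$-openness and $C^r$-density, is the technical heart of the argument.
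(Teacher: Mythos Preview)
The paper does not contain a proof of this statement. Theorem~\ref{thm_codim1} is quoted verbatim as Theorem~1.7 of~\cite{GRH3} and is used only as background motivation; the sentence introducing it reads ``Specifically, recall the following result from~\cite{GRH3},'' and no argument is given. There is therefore no proof in the paper against which your proposal can be compared.

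That said, your outline is broadly consistent with the matching-functions philosophy described elsewhere in the paper (Section~2.2 and the proofs of Theorems~\ref{thm_flows} and~\ref{thm_flows2}), and the overall architecture --- H\"older conjugacy from structural stability, $C^{1+\eps}$ along $W^s$ via Livshits and de~la~Llave, matching functions for the unstable direction, then Journ\'e --- is the right shape. One point to be careful about: in the diffeomorphism setting of~\cite{GRH3} the matching functions are built from Jacobian data rather than from the temporal-distance PCFs used for flows in this paper, so your invocation of ``simple periodic cycle functionals'' is slightly off-target terminologically. Also, your description of $\cV$ as a non-degeneracy locus for a finite family of functionals is plausible but vague; in the paper's flow analogue (Proposition~\ref{prop}) the open-dense set is instead characterized by the \emph{absence} of nontrivial invariant subbundles compatible with stable holonomy, which is a somewhat different formulation. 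If you want a genuine proof you will need to consult~\cite{GRH3} directly.
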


Following the proof of the above theorem and using the roof function instead of using Jacobians an interested reader will also be able to establish the following result.

\begin{proposition} Let $L\colon\T^d\to\T^d$ be an Anosov diffeomphism satisfying the assumptions of Theorem~\ref{thm_codim1} and let $\phi^t$ be a suspension flow of $L$. Then there exists a $C^1$ neighborhood $\cU$ of $\phi^t$ in the space of volume preserving flows and a $C^1$-open subset $\cV\subset\cU$ such that if $\phi_1^t\in\cV$ and $\phi_2^t\in\cU$ are conjugate then they are, in fact, $C^{1+\eps}$ conjugate for some $\eps>0$.
\end{proposition}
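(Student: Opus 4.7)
The plan is to reduce the proposition to Theorem~\ref{thm_codim1} by realizing both flows as suspensions with (possibly non-constant) roof functions over Anosov diffeomorphisms close to $L$, and then to run the proof of \cite{GRH3} with the roof function in place of the Jacobian cocycle. Because $\phi_1^t\in\cV$ is $C^1$-close to the constant-roof suspension of $L$, I would fix smooth global cross-sections $\Sigma_i\cong\T^d$ for $\phi_i^t$, $i=1,2$, on which the Poincaré return maps $f_i\colon\Sigma_i\to\Sigma_i$ are smooth and $C^1$-close to $L$, and the roof functions $r_i\colon\Sigma_i\to\R_{>0}$ are smooth and close to $1$. Projecting along $\phi_2^t$-orbits, the conjugacy $h$ descends to a homeomorphism $h_0\colon\Sigma_1\to\Sigma_2$ topologically conjugating $f_1$ to $f_2$, and comparing flow-return times along $h$-related orbits gives $r_1-r_2\circ h_0=\sigma\circ f_1-\sigma$ for some continuous function $\sigma\colon\Sigma_1\to\R$ recording the lateral offset introduced by the projection.

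Because $h$ conjugates the flows, it preserves all closed-orbit periods; hence for every $f_1$-periodic orbit $\gamma\subset\Sigma_1$,
\[
\sum_{x\in\gamma}r_1(x)=\sum_{y\in h_0(\gamma)}r_2(y),
\]
so the additive roof cocycles $S_nr_i\defin r_i+r_i\circ f_i+\dots+r_i\circ f_i^{n-1}$ have matching periodic data for $(f_1,r_1)$ and $(f_2,r_2)$. At this stage I would invoke the proof of Theorem~\ref{thm_codim1} from \cite{GRH3} with $S_nr_i$ playing the role of the logarithm of the Jacobian cocycle. The roof cocycle is smooth, its periodic data match by the displayed identity, and so the periodic cycle functional construction in \cite{GRH3} produces smooth matching functions along the unstable foliation of $f_1$. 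Combined with the $C^{1+\eps}$ regularity of $h_0$ along the one-dimensional stable foliation guaranteed by the bunching condition $(\log\mu)^2-(\log\xi_l)^2>\log\mu(\log\xi_l-\log\xi_1)$, this upgrades $h_0$ to a $C^{1+\eps}$ diffeomorphism for some $\eps>0$, after possibly shrinking the open set $\cV$.

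To finish, I would propagate the regularity of $h_0$ back up to $M$. Since $h$ is determined by $h(\phi_1^s(x))=\phi_2^{s-\sigma(x)}(h_0(x))$ for $x\in\Sigma_1$ and $0\le s<r_1(x)$, with $\sigma$ as above, and both flows are smooth, the $C^{1+\eps}$ regularity of $h_0$, $r_1$ and (by a coboundary argument for $\sigma$ from a smooth matching function) of $\sigma$, immediately gives $h$ of class $C^{1+\eps}$ off $\Sigma_1$, and a standard matching-across-$\Sigma_1$ argument (using that along each $\phi_1^t$-orbit $h$ is a smooth function of time) extends this globally. The main obstacle is the middle step: one must verify that the proof of Theorem~\ref{thm_codim1} in \cite{GRH3} is genuinely a statement about an abstract smooth additive cocycle with matching periodic data under the stated bunching, so that substituting the roof cocycle for the log-Jacobian is a legitimate move. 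This requires revisiting the Livshits-type cohomological lemmas and the Journé-type regularity bootstraps of \cite{GRH3} and confirming that they use nothing specific to Jacobians beyond smoothness and the cocycle identity; the volume-preservation hypothesis on $\phi_1^t,\phi_2^t$ should be what is needed to produce the invariant measures on the sections in the form required.
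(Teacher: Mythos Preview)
Your proposal is correct and follows precisely the route the paper indicates: the paper gives no proof of this proposition beyond the one-line hint ``following the proof of the above theorem and using the roof function instead of Jacobians,'' and your reduction to cross-section dynamics, matching of roof-cocycle periodic data via period preservation, and invocation of the \cite{GRH3} matching-functions argument with the roof cocycle substituted for the log-Jacobian is exactly this program spelled out. Your closing caveat---that one must check the \cite{GRH3} machinery depends only on smoothness and the cocycle identity, not on anything specific to Jacobians---is well placed and is precisely the verification the paper delegates to the reader.
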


\subsection{Organization} 
In the next section we recall some facts about regularity of invariant foliations. Then we introduce the matching function machinery for Anosov flows. In Section~3 we give a rather self-contained proof of Theorem~\ref{thm_flows}, except for the bootstrap from $C^1$ to $C^\infty$ which was done in~\cite{GRH3}. In Section~4 we prove Theorem~\ref{thm_flows2}.

\section{Preliminaries}

\subsection{Regularity of invariant foliations}

Let $\phi^t$ be a codimension Anosov flow. The regularity of invariant subbundles is a well-studied in the literature~\cite{HPS, Hass}. In particular, the weak stable distribution $E^{0s}$ is $C^\nu$ if there exists a sufficiently large $t$ such that
$$
\sup \|D\phi^t(v^s)\|\cdot\|D\phi^{-t}(v_1^u)\|\cdot\|D\phi^t(v_2^u)\|^\nu<1
$$
where the supremum is taken over all unit vectors $v^s\in E^s(x),  v_2^u\in E^u(x), v_1^u\in E^u(\phi^t(x))$, $x\in M$.
The stable distribution is $C^\nu$ if there exists a sufficiently large $t$ such that
$$
\sup \|D\phi^t(v^s)\|\cdot\|D\phi^t(v^u)\|^\nu<1
$$
where the supremum is taken over all unit vectors $v^s\in E^s(x), v^u\in E^u(x)$, $x\in M$. Note that the second condition is stronger then the first one. We can verify this condition if $\phi^t$ is a codimension one volume preserving Anosov flow on a manifold of dimension $\ge 4$. 

Using volume invariance we have
$$
 \|D\phi^t(v^s)\|\cdot\|D\phi^t(v^u)\|<\|D\phi^t(v^s)\|J^{u}\phi^t=J^s\phi^t J^{u}\phi^t=1,
$$
where $J^s\phi^t $ and $J^{u}\phi^t$ denote the stable and the unstable Jacobians of $\phi^t$.
By compactness the supremum the above expression is also $<1$. Therefore $E^s$ and $E^{0s}$ are $C^1$. Hence, the stable foliation $W^s$ and and the  weak stable foliation $W^{0s}$ are also uniformly $C^1$. (In fact, because the inequality is strict one can also conclude $C^\nu$ regularity for some $\nu>1$.) Reversing the time and using the same criterion one can also see that the weak unstable foliation $W^{0u}$ is uniformly $C^1$, but the unstable $W^u$ is merely H\"older continuous transversely to the leaves.

\subsection{Matching functions and the Subbundle Theorem}

Now let $\phi_i^t\colon M\to M$, $i=1,2$, be codimension one, volume preserving Anosov flows which are conjugate, $h\circ \phi_1^t=\phi_2^t\circ h$. We will consistently use the subscript $i\in\{1,2\}$ for distribution and foliations of the flow $\phi_i^t$ to indicate dependence on the flow.

Here we will explain a certain construction of sub-bundles $E_i$ of the unstable bundles $E_i^u$ via locally matching functions.

For each $x$ consider pairs of $C^1$ functions $(\rho^1,\rho^2)$ where $\rho^1$ is defined on an open neighborhood of $x$ in $W^{0u}_1(x)$, $\rho^2$ is defined on an open neighborhood of $h(x)$ in $W^{0u}_2(h(x))$ and such that
$$
\rho^1=\rho^2\circ h
$$
This relation is what we call a {\it matching relation.} We collect all such pairs into a space $V_x$
$$
V_x=\{(\rho^1,\rho^2): \rho^1=\rho^2\circ h\}
$$
The domains of definition of $\rho^1$ and $\rho^2$ can be arbitrarily small open sets. Also denote by $V_{x,1}$ the collection of all possible $\rho^1$, that is, projection of $V_x$ on the first coordinate, and by $V_{x,2}$ the projection on the second coordinate.

Now we can define linear subspaces $E_i(x)\subset E^{0u}_i(x)$ by intersecting kernels of all $D\rho^i$ at $x$, $i=1,2$. Namely,
$$
E_i(x)=\bigcap_{ \rho^i\in V_{x,i}} \ker D\rho^i(x)
$$
Also let $m_i(x)=\dim E_i(x)$ and let $m_i=\min_{x\in M} m_i(x)$. 

Conjugacy relation implies that if $(\rho^1,\rho^2)\in V_{\phi_1^t(x)}$ then $(\rho^1\circ \phi_1^t,\rho^2\circ \phi_2^t)\in V_{x}$. It immediately follows that $E_i$ is invariant under $D\phi_i^t$. Also, we can check that $E_i(x)$ is invariant under the stable holonomy. Indeed, let $b\in W^{s}_1(a)$ and let 
$Hol_{a,b}\colon W^{0u}_{1,loc}(a)\to W^{0u}_{1,loc}(b)$ be the stable holonomy given by sliding points along the stable foliation $W^{s}_1$ with $Hol_{a,b}(a)=b$. Similarly, $Hol_{h(a),h(b)}\colon W^{0u}_{2,loc}(h(a))\to W^{0u}_{2,loc}(h(b))$ is the stable holonomy for $\phi_2^t$. Let $(\rho^1,\rho^2)\in V_b$, then, because the stable holonomy is $C^1$, and the conjugacy sends the stable foliation of $\phi_1^t$ to the stable foliation of $\phi_2^t$ we can conclude that $(\rho^1\circ Hol_{a,b}, \rho^2\circ Hol_{h(a),h(b)})\in V_a$. From this and the definition of the spaces $E_i$ we can see that $E_1(b)=D\, Hol_{a,b} E_1(a)$ and $E_2(h(b))=D\, Hol_{h(a),h(b)} E_2(h(a))$. In particular, we conclude that the level sets of the dimension function $\{x: m_i(x)=k \}$, are $W^{s}_i$-saturated. But they are also saturated by the flow lines, hence, $W^{0s}_i$-saturated.

It is not hard to verify that $m_i\colon M\to\Z_+$ is an upper semi-continuous function. To see that one can write $E_i(x)$ as a finite intersection (due to $E_i(x)$ being a finite dimensional subspace)
$$
E_i(x)=\bigcap_{j=1}^K \ker D\rho^i_j(x)
$$
with minimal $K$. Then, we have that $\ker D\rho^i_j(x)$ depend continuously on $x$ due to $C^1$ regularity of $\rho_i^j$. Then for all $y$ which are sufficiently close to $x$
$$
E_i(y)\subset \bigcap_{j=1}^K \ker D\rho^i_j(y)
$$
and we have
$$
m_i(y)=\dim E_i(y)\le \dim \bigcap_{j=1}^K \ker D\rho^i_j(y)=\dim \bigcap_{j=1}^K \ker D\rho^i_j(x)=\dim E_i(x)=m_i(x)
$$
which is indeed the upper semi-continuity property for $\mathbb Z$-valued funciton.

Therefore we have that the set $\{x: m_i(x)=m_i \}=\{x: m_i(x)<m_i+\frac12 \}$ is open, non-empty and saturated by the minimal foliation $W^{0s}_i$, which means that $\{x: m_i(x)=m_i \}=M$ and we have well-defined $C^1$ distributions $E_i\subset E_i^u$. We have the following Subbundle Theorem, which is a direct analogue of the Subbundle Theorem for Anosov diffeomorphisms~\cite[Theorem 4.1]{GRH3}.

\begin{theorem}[Subbundle Theorem]
\label{thm_tech}
Let $\phi_i^t\colon M\to M$, $i=1,2$, be conjugate Anosov flows, $h\circ \phi_1^t=\phi_2^t\circ h$. Assume that both flows have $C^1$ stable foliations.
Then there exist $C^1$ regular, $D\phi_i^t$-invariant distributions $E_i\subset E_i^u$, such that
\begin{enumerate}
\item distributions $E_i$ integrate to $\phi_i^t$-invariant foliations $W_i\subset W_i^u$;
\item the distribution $E^s_i\oplus E_i$ integrates to an $\phi_i^t$-invariant $C^1$ foliation 
which is subfoliated by both $W^s_i$ and $W_i$;
\item conjugacy $h$ maps $W_1$ to $W_2$;
\item the restrictions of $h$ to the unstable leaves are uniformly $C^1$ transversely to
$W_1$;
\item if $(\rho^1,\rho^2)\in V_x$ is a matching pair then $\rho_i$ is constant on connected local leaves of $W_i$;
\end{enumerate}
\end{theorem}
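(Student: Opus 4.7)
The preamble has already established that $E_i$ is a well-defined, $D\phi_i^t$-invariant, weak-stable-holonomy-invariant distribution of constant rank $m_i$. What remains is to verify each of the five structural properties; the approach is to combine the matching relation $\rho^1=\rho^2\circ h$ with elementary differential-geometric arguments, first inside each unstable leaf and then extended via the $C^1$ weak-stable foliation.

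For items (1) and (5) I would fix an unstable leaf $W^u_i(x)$, on which $E_i$ has constant codimension $k:=\dim E^u_i-m_i$. Choose matching functions $\rho^i_1,\dots,\rho^i_k\in V_{x,i}$ whose differentials at $x$ span the annihilator of $E_i$; by upper semi-continuity of the rank this spanning property persists on a neighborhood, so the common level sets foliate a neighborhood in $W^u_i(x)$ by codimension-$k$ $C^1$ submanifolds tangent to $E_i$. These local plaques are automatically coherent because every $\rho^i\in V_{x,i}$ has $d\rho^i|_{E_i}\equiv 0$ by definition and is therefore constant along the plaques---which is precisely item (5). Using the $\phi_i^t$- and weak-stable-holonomy-invariance of $E_i$ to transport the plaques then assembles them into a global foliation $W_i\subset W^u_i$, proving (1).

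For (3), two points $x,y$ on the same local leaf of $W_1$ satisfy $\rho^1(x)=\rho^1(y)$ for every $\rho^1\in V_{x,1}$; the matching relation immediately gives $\rho^2(h(x))=\rho^2(h(y))$ for every $\rho^2\in V_{x,2}$, and by (5) applied to $W_2$ this places $h(x)$ and $h(y)$ on the same local leaf of $W_2$. For (4), the tuple $(\rho^1_1,\dots,\rho^1_k)$ furnishes $C^1$ coordinates on the local transversal $W^u_1(x)/W_1$ near $x$, and $(\rho^2_1,\dots,\rho^2_k)$ furnishes $C^1$ coordinates on $W^u_2(h(x))/W_2$ near $h(x)$; in these coordinates the identities $\rho^1_j=\rho^2_j\circ h$ say precisely that the transversal map induced by $h$ is the identity. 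Uniformity in $x$ follows from compactness of $M$ and the uniform $C^1$ regularity of the weak-stable foliations.

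For (2), I would use the $C^1$ weak-stable foliation to extend each matching function $\rho^1$ to $\tilde\rho^1:=\rho^1\circ\Pi^{0s}_1$, where $\Pi^{0s}_1$ is the $C^1$ projection along $W^{0s}_1$-leaves onto a local unstable transversal. Then $\tilde\rho^1$ is a $C^1$ function that is constant along $W^{0s}_1$ and whose kernel equals $E^{0s}_1\oplus\ker d\rho^1$, so intersecting over all matching functions yields $E^{0s}_1\oplus E_1\supset E^s_1\oplus E_1$ as the common kernel of a $C^1$ family. This distribution integrates to a $C^1$ foliation subfoliated by $W^s_1$, the flow, and $W_1$, with the symmetric statement on the $\phi_2^t$ side. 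The step I expect to be the main obstacle is securing the $C^1$ regularity of $E_i$ itself: because matching functions are only assumed $C^1$, common kernels of their differentials are \emph{a priori} merely $C^0$ distributions, and upgrading to $C^1$ relies on the $D\phi_i^t$-invariance of $E_i$ together with the normal hyperbolicity inside $E^u_i$ to bootstrap the regularity, following the analogous argument in~\cite[Theorem 4.1]{GRH3}.
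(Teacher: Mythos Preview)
Your overall approach is the same as the paper's: the paper also derives everything from the matching construction and weak-stable holonomy invariance, deferring the verifications of items~(3)--(5) to the analogous discrete-time argument in~\cite{GRH3}. Your treatment of (1), (5) and (3) is correct and in line with that argument. Two points, however, deserve attention.

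\medskip
\textbf{Item (2).} Your extension $\tilde\rho^1=\rho^1\circ\Pi^{0s}_1$ only shows that $E^{0s}_1\oplus E_1$ integrates, and being subfoliated by $W^s_1$ and by $W_1$ separately does \emph{not} imply that $E^s_1\oplus E_1$ integrates. Inside a leaf of the $E^{0s}_1\oplus E_1$-foliation you have a genuine Anosov flow with stable $E^s_1$ and unstable $E_1$, and joint integrability of these is exactly the non-trivial condition you are trying to prove. What is actually needed is that the \emph{strong} stable holonomy $Hol^s_{a,b}\colon W^{0u}_{loc}(a)\to W^{0u}_{loc}(b)$ sends $W_1$-plaques to $W_1$-plaques. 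Writing $Hol^s_{a,b}(x)=\phi^{\rho_{a,b}(x)}\bigl(Hol^{0s}_{a,b}(x)\bigr)$ for $x\in W^u_{loc}(a)$, where $\rho_{a,b}$ is the simple PCF, you see this holds precisely when $\rho_{a,b}$ is constant on $W_1$-plaques. That in turn follows from your item~(5) together with the fact that the simple PCFs form matching pairs, $\rho^1_{a,b}=\rho^2_{h(a),h(b)}\circ h$, which is property~(1) of PCFs recorded in Section~3. So the gap is closable, but it requires this extra ingredient; it does not follow from the weak-stable extension alone.

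\medskip
\textbf{Item (4).} You choose $\rho^1_1,\dots,\rho^1_k$ with independent differentials and then assert that the matching $\rho^2_1,\dots,\rho^2_k$ also furnish $C^1$ coordinates transverse to $W_2$. Nothing in the matching relation guarantees that the $d\rho^2_j(h(x))$ are independent. The clean fix is to reverse the roles: choose $\rho^2_1,\dots,\rho^2_{k_2}\in V_{h(x),2}$ with independent differentials (these exist by the definition of $E_2$), let $\rho^1_j$ be their matching partners, and observe that $\rho^2_j\circ h=\rho^1_j$ is $C^1$ and constant on $W_1$-plaques by~(5). Then $\pi_2\circ h\colon W^u_1\to W^u_2/W_2$ is $C^1$ and factors through $W^u_1/W_1$, which is the content of~(4). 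This is essentially what the paper does in Section~3 in the full-rank case, where the passage from regularity of $h^{-1}$ to regularity of $h$ is handled by a separate Jacobian argument.
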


First we need to verify that $E_i(x)\subset E_i^u(y)$. To see that consider $\rho_i(y)$ to be the local time it takes for $y$ to arrive on the local unstable manifold of $x$, that is,  $\rho_i(y)$, $y\in W^{0u}_i(x)$, is defined by
$$
\phi^{\rho_i(y)}_i\in W^u_{i,loc}(x)
$$
Then, clearly $\ker D\rho_i (x)=E_i^u(x)$ and, hence, $E_i(x)\subset E_i^u(y)$, $i=1,2$.

The proof of the first item is based on the fact that finite intersections of level sets of matching functions give the integral submanifolds of $E_i$. This argument is omitted as it is exactly the same as the argument in~\cite{GRH3}. The second item comes from the fact that $E_i$ and $\cF_i$ are invariant under the stable holonomy. Indeed, from the discussion preceding the statement of the Subbundle Theorem we have that $V_{b,i}\circ Hol_{a,b}=V_{a,i}$, $a\in W^s_i(b)$, hence we conclude that $DHol_{a,b} E_i(a)=E_i(b)$ and $Hol_{a,b}(\cF_{i, loc}(a))=\cF_{i,loc}(b)$, where the latter precisely means joint integrability of $\cF_i$ and $W^s_i$. We remark that $C^1$ regularity of the stable foliation is crucial for $E_i$ to be well-defined and to satisfy the second property.

The rest of these properties are verified in verbatim the same way as in the discrete time case~\cite{GRH3}, hence, we refrain from repeating these arguments here. 
\section{Proof of Theorem~\ref{thm_flows} on rigidity of 4-dimensional volume preserving Anosov flows}

We will give a self-contained proof for $C^1$ regularity. Using the Subbundle Theorem one can make a shorter proof, but we opt for a slightly longer exposition in order for the proof to be self-contained. At the end of the proof we will refer to~\cite{GRH3} for the bootstrap from $C^1$ to $C^\infty$.

Without loss of generality we can assume that the stable foliation is one-dimensional and the unstable foliation is two-dimensional. Then $D\phi_1^T(p)|_{E_{1}^u(p)}$ has a pair of (non-real) complex conjugate eigenvalues. We will denote by $W^{s}_i$, $W^{u}_i$, $W^{0s}_i$ and $W^{0u}_i$ the stable, unstable, weak stable and weak unstable foliations of $\phi_i^t$, respectively, $i=1,2$.

We begin with the definition of the simple PCFs (periodic cycle functionals) for Anosov flows $\phi_i^t\colon M\to M$.  Consider the holonomy along the stable foliation
$
Hol_{a,b}\colon W_{i,loc}^{0u}(a)\to W_{i,loc}^{0u}(b)
$
which takes $a$ to $b$. Then for any point $x\in W^u_{i,loc}(a)$ consider $Hol_{a,b}(x)$ and let $y$ be the unique point on $W^u_{i, loc}(b)$ such that $y$ and $Hol_{a,b}(x)$ belong to the same short orbit segment, in other words, $y=W^u_{loc}(b)\cap W^{0s}_{loc}(Hol_{a,b}(x))$. Define {\it simple PCF} $\rho^i_{a,b}\colon W^u_{i,loc}(a)\to\R$ as the flow time from $y$ to $Hol_{a,b}(x)$, that is,
$$
\phi_i^{\rho^i_{a,b}(x)}(y)=Hol_{a,b}(x)
$$
(The quantity $\rho^i_{a,b}(x)$ is also known as {\it temporal distance.})

\begin{figure}[h]
  \includegraphics{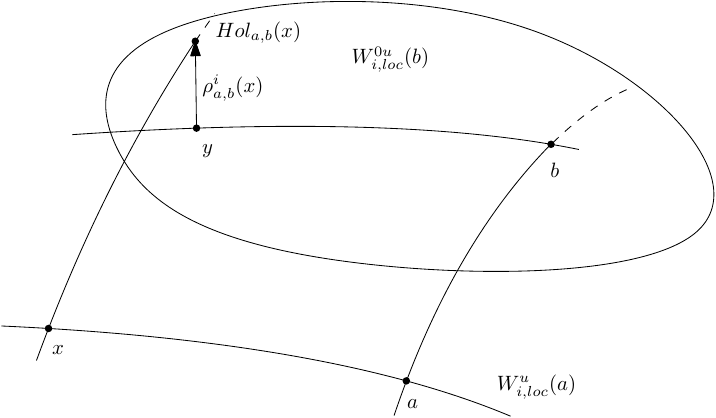}
  \caption{Simple PCF.}
\end{figure}

\begin{remark}
The above definition corresponds to PCFs with potential equal to 1 (hence the term ``simple''), so we omitted the dependence on the potential function. Also, unlike general PCFs, such simple PCFs can be defined in the geometric way by measuring the amount of non-intergrability between the stable and unstable foliations as explained above. (For a more general definition with arbitrary H\"older potential and basic properties of PCFs for Anosov flows see~\cite{GRH2}.) Let $a\in M$ and $b\in W^s_i(a)$.
We will need the following properties of simple PCFs.
\end{remark}

\begin{enumerate}
\item $\rho_{a,b}^i$ are conjugacy invariant, that is,
$$
\rho^1_{a,b}=\rho^2_{h(a),h(b)}\circ h
$$
Indeed this immediately follows from the definition and the fact that the conjugacy takes the stable and unstable foliations of $\phi_1^t$ to the stable and unstable foliations of $\phi_2^t$.
\item If $\rho_{a,b}^i\equiv0$ for all $a\in M$ and $b\in W^s_i(a)$ then foliations $W^s_i$ and $W^u_i$ jointly integrate to a codimension 1 foliation. This is also immediate from the definiton because $\rho_{a,b}^i\equiv0$ precisely means that the stable holonomy preserves the unstable leaves.
\item $\rho_{a,b}^i\colon W^u_{i,loc}(b)\to\R$ are uniformly $C^1$. Indeed, notice that $\rho_{a,b}^i$ is given by the time needed to flow from the image $Hol_{a,b}(W^u_{i,loc}(a))$ to $W^u_{i,loc}(b)$. The manifolds $W^u_{i,loc}(a)$ and $W^u_{i,loc}(b)$ are smooth, hence, $\rho_{a,b}^i$ is as smooth as the stable holonomy. 
\end{enumerate}

Now we can proceed with the proof by considering two cases.

\subsection{Case I: vanishing of PCFs} Assume that all $\rho^1_{a,b}\equiv0$ for all $a$ and $b$. Then by property~2 above $W^s_1$ and $W^u_1$ integrate together. Hence, one can apply a theorem of Plante~\cite[Theorem 3.1]{Pl} to conclude that $\phi_1^t$ is a suspension of a 3-dimensional Anosov diffeomorphism. By Newhouse's result on classification of codimension one Anosov diffeomorphisms, this diffeomorphism must live on $\T^3$ and must induce a hyperbolic automorphism on $H_1(\T^3;\R)$ by the Franks-Manning classification. Hyperbolicity of  induced map on  $H_1(\T^3;\R)$ easily implies that $H_1(M,\R)\simeq \R$, which allows to apply another result of Plante~\cite[Section~3]{Pl} which says that in this case the suspension is, in fact, a constant roof suspension. Hence in this case we obtain that the flows $\phi_i^t$ are constant roof suspensions. (Alternatively, instead of applying~\cite{Pl} in the last step one can deduce that the roof function is constant from vanishing of simple PCFs by using \cite[Proposition~2.2]{GRH3}.)

\subsection{Case II: non-vanishing PCFs} Now we assume that for some $a\in M$ and some $b\in W^s_1(a)$ we have $\rho_{a,b}^1\not\equiv 0$. From matching we also have $\rho_{h(a),h(b)}^2\not\equiv 0$. Then, because $\rho_{a,b}^1(a)=0$  we have that $\rho_{a,b}^1$ is non-constant and, hence, there exists $x_0\in W^u_{1,loc}(a)$ such that $D\rho_{a,b}^1(x_0)\neq 0$. Since $\rho_{a,b}^1$ is $C^1$ we also have $D\rho_{a,b}^1(y)\neq 0$ for all $y$ which are sufficiently close to $x_0$.


Now recall that we have a matching pair $(\rho^1_{a,b}, \rho^2_{h(a), h(b)})\in V_{x_0}$ with $D\rho^1_{a,b}(x_0)\neq 0$. Hence $m_1\le m_1(x_0)\le 1$. Similarly, we have $m_2\le 1$. (Recall the definition of $m_i$ from Section~2.2.)

First assume that $m_1=1$. If $m_1(x)=1$, then there exist $\rho^1\in V_{x,1}$ such that $D\rho_1(x)\neq 0$ and because $\rho^1$ is $C^1$ we also have $D\rho_1(y)\neq 0$ for all $y$ which are sufficiently close to $x$. This proves that the set $\{x: m_1(x)=1\}$ is a non-empty set which is open inside unstable leaves. But we also have that $\{x: m_1(x)=1\}$ is $W^{0s}_1$-saturated. Hence $\{x: m_1(x)=1\}$ is non-empty, open and $W^{0s}_1$-saturated. By minimality of $W^{0s}_1$ we conclude that $\{x: m_1(x)=1\}=M$. In particular $m_1(p)=1$, which implies that $E_1(p)\subset E_1^u(p)$ is a one-dimensional subspace invariant under $D\phi_1^T|_{E_1^u(p)}$, where $T$ is the period of $p$. But by our assumption $D\phi_1^T|_{E_1^u(p)}$ doesn't have real eigenvalues. Hence we obtain a contradiction in this case and conclude that $m_1=0$.

If $m_1(x)=0$ then, by applying the same argument as in the previous paragraph, to a pair of functions $\rho^1, {\bar \rho^1}\in V_{x,1}$ such that $\ker D\rho^1\cap \ker D{\bar\rho^1}=\{0\}$ we have that $m_1(y)=0$ for all $y$ which are sufficiently close to $x$. Hence the set $\{x: m_i(x)=0\}$ is non-empty, open and $W_1^{0s}$-saturated. Hence $m_1(x)=m_1=0$ for all $x\in M$.

For any $x$ consider $(\rho^1,\rho^2), ({\bar \rho^1}, \bar\rho^2) \in V_{x}$ such that $\ker D\rho^1\cap \ker D{\bar\rho^1}=\{0\}$. Let $\euP^1=(\rho^1, {\bar \rho^1})$ and $\euP^2=(\rho^2, {\bar \rho^2})$. Because $D\rho^1(x)$ and $D\bar\rho^1(x)$ are linearly independent we have that $\euP^1$ is an invertible $C^1$-diffeomorphism on a small neighborhood of $x$ by the Inverse Function Theorem. By matching relations we have
$$
\euP^1=\euP^2\circ h
$$
and we obtain that $h^{-1}=(\euP^1)^{-1}\circ \euP^2$, which is $C^1$ on a small neighborhood of $x$. Since $x$ was an arbitrary point we have that $h^{-1}$ is $C^1$ along the unstable foliation.

To see that $h$ is also $C^1$ along the unstable foliation note that $h^{-1}=(\euP^1)^{-1}\circ \euP^2$ is a $C^1$ map which maps a 2-dimensional open set in the unstable leaf to a 2-dimensional open set. Then the measure of the image of $h^{-1}=(\euP^1)^{-1}\circ \euP^2$ is positive and given by the integral of Jacobian of $h^{-1}$. Hence this Jacobian is non-zero on some open subset, which implies, again by the Inverse Function Theorem, that $h^{-1}$ a $C^1$ diffeomorphism when restricted to this subset. Hence, on this set we have $m_2(x)=0$ and because the set $\{x: m_2(x)=0\}$ is $W^{0s}_2$-saturated we can conclude that $h$ is $C^1$ along every unstable leaf by exactly the same argument we used for $h^{-1}$ above. (Note that we couldn't argue completely symmetrically here because we did not assume existence of a periodic point with complex eigenvalues for $\phi_2^t$.) 


So, we have that $h$ is a $C^1$ diffeomorphism when restricted to any unstable leaf.  Finally to bootstrap the regularity of $h$ along the unstable leaves we can apply \cite[Proposition~3.3]{GRH3} to the time-$T$ maps $\phi_1^T$ and $\phi_2^T$ and conclude that $h$ is a smooth diffeomorphism when restricted to $W^u_1(p)$. By using dynamics we  have that $h|_{W^{0u}_1(p)}$ is also smooth. Then to obtain uniform smoothness along all unstable leaves we can use denseness of $W^{0u}_1(p)$  and repeat the arguments in the second half of Section~3 of~\cite{GRH3} almost verbatim adjusting to the flow case.

Recall that the flows $\phi_i^t$ are volume preserving. We already have that conjugacy $h$ is smooth along the unstable foliation. Hence the unstable Jacobians match at periodic points and invariance of volume gives matching of stable Jacobians as well. Then $h$ is smooth along the stable foliation by using the 1-dimensional de la Llave argument~\cite{dlL} and by applying the Journ\'e Lemma~\cite{J} we can conclude that $h$ is a smooth diffeomorphism.

\section{Proof of Theorem~\ref{thm_flows2} on rigidity of codimension one volume preserving Anosov flows}

Without loss of generality we can assume that the stable subbundle is one-dimensional, otherwise we can reverse the time direction.
Theorem~\ref{thm_flows2} is deduced from the Subbundle Theorem~\ref{thm_tech} and the following proposition.
\begin{proposition}
\label{prop}
Let $M$ be a closed manifold of dimension at least 4. Consider the space of  volume preserving Anosov flows on $M$ with $\dim E^s=1$. Then there exists a $C^1$ open and $C^\infty$ dense subset $\cV$ such that if $\phi^t\in\cV$ then the unstable bundle does not admit a non-trivial subbundle $E\subset E^u$, $\dim E>0$, such that $E$ is $D\phi^t$-invariant and integrates to a foliation which is jointly integrable with $W^s$.
\end{proposition}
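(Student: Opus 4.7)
The plan is to identify a $C^1$-open spectral condition at some periodic orbit that obstructs the existence of a nontrivial invariant subbundle $E$ satisfying the joint integrability hypothesis, and to verify $C^\infty$-density of this condition through a conservative Franks-type perturbation.

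The first step is to extract the relevant periodic data. If such an $E$ exists, then at every periodic point $p$ of period $T$, $E(p) \subset E^u(p)$ is a nontrivial proper subspace invariant under the return map $D\phi^T|_{E^u(p)}$; the joint integrability with $W^s$ further forces $E^s(p) \oplus E(p)$ to be $D\phi^T$-invariant, being the tangent space at $p$ to the $\phi^T$-invariant leaf (through $p$) of the integral foliation of $E^s \oplus E$. In particular, the existence of $E$ demands a coherent, continuously varying assignment of nontrivial proper invariant subspaces of the expanding return maps across the dense set of periodic orbits. I would then define $\mathcal V \subset \mathcal U$ as the set of flows admitting a periodic point $p$ whose return map $D\phi^T|_{E^u(p)}$ has an irreducible spectral structure incompatible with any such continuous selection. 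In the simplest case $\dim E^u = 2$, the condition is that $D\phi^T|_{E^u(p)}$ has a pair of complex-conjugate (non-real) eigenvalues, which alone forbids any $1$-dimensional real invariant subspace; in higher $\dim E^u$, I would impose analogous conditions at several periodic orbits chosen so that the finitely many candidate invariant subspaces at each are pairwise incompatible across orbits, preventing any continuous selection.

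$C^1$-openness of $\mathcal V$ is immediate from persistence of hyperbolic periodic orbits and continuity of their eigenvalues. For $C^\infty$-density, I would apply a Franks-type lemma in the conservative setting (cf.\ Bessa--Rocha and related work): given any hyperbolic periodic orbit, any $C^\infty$-small perturbation of the linear Poincar\'e map within the volume-preserving category can be realized by a $C^\infty$-small modification of the vector field supported in a neighborhood of the orbit and preserving the invariant volume. Combined with standard Kupka--Smale-type arguments for conservative flows, this produces, in any $C^\infty$-neighborhood of $\phi^t \in \mathcal U$, a flow realizing the spectral condition defining $\mathcal V$.

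The main obstacle is the density step: constructing the conservative $C^\infty$-perturbation that hits the targeted spectral pattern while keeping the flow Anosov. Conservative perturbations have far fewer degrees of freedom than general ones, and one must choose the spectral condition carefully so that it is simultaneously robust under $C^1$ perturbations, realizable in the conservative category, and sufficient to rule out every possible value of $\dim E$ between $0$ and $\dim E^u$.
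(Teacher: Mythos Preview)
Your approach works when $\dim E^u=2$: a complex eigenvalue pair for the return map on $E^u(p)$ forbids any $1$-dimensional invariant line, and this is essentially how the paper handles Theorem~\ref{thm_flows}. The gap appears in higher dimensions. Any real linear map on $E^u(p)$ with $\dim E^u(p)\ge 3$ has nontrivial proper invariant subspaces, so no spectral condition at a single periodic orbit can rule out every possible value of $\dim E$. Your remedy---imposing ``pairwise incompatible'' conditions at several periodic orbits---is not made precise: the candidate subspaces at distinct periodic points $p$ and $q$ live in different tangent spaces, and to compare them you need an identification. The only identification furnished by the hypotheses is the stable holonomy, which comes precisely from the joint integrability of $E$ with $W^s$, and you never invoke it. Without it there is no obstruction to a continuous $D\phi^t$-invariant subbundle that selects one of the (finitely many) invariant subspaces at each periodic orbit. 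Note also that you silently assume $E\subsetneq E^u$; the case $E=E^u$, i.e.\ joint integrability of $W^s$ and $W^u$, is not touched by eigenvalue data at all.

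The paper exploits the joint integrability hypothesis directly. If $E$ integrates jointly with $W^s$ then stable holonomy preserves the leaves of $E$, so for a heteroclinic point $r\in W^s_{loc}(p)\cap W^{0u}(q)$ one has $D\,Hol^s_{r,p}(E(r))=E(p)$; hence the codimension-one subspace $D\,Hol^s_{r,p}(E^u(r))\subset E^{0u}(p)$ must contain the $D\phi^{T(p)}$-invariant subspace $E(p)$. The set $\cV$ is defined by two conditions at a fixed heteroclinic configuration: (1) the return map on $E^u(p)$ has only finitely many invariant subspaces $F_\alpha$, and (2) $D\,Hol^s_{r,p}(E^u(r))\not\supset F_\alpha$ for all $\alpha$. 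Density of (1) is routine; density of (2) is the main technical point and is achieved not by a Franks-type perturbation of the linearized return map but by a $C^\infty$-small \emph{reparametrization} of the flow supported near the first return of $r$. An explicit formula (Claim~\ref{claim_44}) shows that such reparametrizations move $D\,Hol^s_{r,p}(E^u(r))\cap E^u(p)$ through an open set in the Grassmannian of hyperplanes in $E^u(p)$, so a generic choice avoids all the $F_\alpha$.
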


We first explain how this proposition implies Theorem~\ref{thm_flows2} and then prove the proposition.

We have conjugate flows $\phi_1^t$ and $\phi_2^t$ with $\phi_1^t\in\cV$, where $\cV$ is given by the above proposition. Applying the Subbundle Theorem yields a subbundle $E_1\subset E_1^u$ which is invariant and integrates jointly with $E_1^s$. If $\dim E_1>0$ then we immediately arrive at a contradiction since Proposition~\ref{prop} assets that no such subbundle exists.

Hence we only need to consider the case when $\dim E_1=0$. In this case item~4 of Theorem~\ref{thm_tech} gives smoothness of the conjugacy $h$ along the unstable foliation. Now we can finish the proof in the same way as the proof of Theorem~\ref{thm_flows}: matching of unstable Jacobians at periodic points and invariance of volume yield matching of stable Jacobians at periodic points; then smoothness of $h$ follows from work of  de la Llave argument~\cite{dlL} and Journ\'e Lemma~\cite{J}.

\begin{remark}
Note that the volume preserving assumption is used two times. First time is to claim existence of a non-trivial matching pair, which comes from matching of simple PCFs. These PCFs are $C^1$ regular due to $C^1$ regularity of the stable holonomy, which, in turn needs the flow to be conservative. We use it the second time to conclude matching of stable Jacobians from matching of unstable Jacobians.
\end{remark}

\begin{proof}[Proof of Proposition~\ref{prop}] We consider the space of volume preserving Anosov diffeomorphisms on $M$ with $C^1$ topology. We begin by observing that it is sufficient to find a $C^1$ open and $C^\infty$-dense subset with posited property in a sufficiently small $C^1$ neighborhood of a given flow $\phi_0^t$. Then the set $\cV$ is given by taking the union of all such subsets.

Hence we fix a codimension one volume preserving Anosov flow $\phi_0^t$. Let $p$ and $q$ be points which belong to distinct periodic orbits and which are heteroclinically related. Pick a heteroclinic point $r\in W_{loc}^s(p)\cap W^{0u}(q)$. Now fix a small $C^1$ neighborhood $\cU$ of $\phi_0^t$ so that $p$, $q$ and $r$ admit continuations for $\phi^t\in\cU$. We define the set $\cV\subset\cU$ in the following way. A flow $\phi^t\in\cV$ if it satisfies the following conditions:
\begin{enumerate}
\item $D\phi^{T(p)}|_{E^u(p)} \colon E^u(p)\to E^u(p)$ admits only finitely many invariant linear subspaces $F_\alpha\subset E^u(p)$, $\alpha\in\cA$, of dimension $\ge 1$;
\item $D\,Hol^s_{r,p}(E^u(r))\nsupset F_\alpha$ for all $\alpha\in\cA$, where $Hol^s_{r,p}\colon W^{0u}_{loc}(r)\to W^{0u}_{loc}(p)$ is the stable holonomy which takes $r$ to $p$.
\end{enumerate}

First, recall that a linear map admits infinitely many distinct invariant subspaces if and only if it has a repeated eigenvalue with at least two linearly independent eigenvectors. This property is closed and, hence, condition~1 above is a $C^1$ open condition. The condition~2 is also $C^1$ open because the holonomy map $Hol^s_{r,p}$ varies continuously in $C^1$ topology. Therefore we have that $\cV\subset\cU$ is $C^1$ open.

Also if $\phi^t\in \cU$ admits a non-trivial subbundle $E\subset E^u$, $\dim E>0$, such that $E$ is $D\phi^t$-invariant and integrates to a foliation which is jointly integrable with $W^s$, then stable holonomy preserves the foliation of $E$ and, in particular, we have $D\,Hol^s_{r,p}(E(r))=E(p)$. Hence we have $D\,Hol^s_{r,p}(E^u(r))\supset E(p)$ and also, by invariance, $E(p)=F_\alpha$ for some $\alpha$. Hence, by condition~2 we can conclude that indeed $\phi^t\notin\cV$.

Hence, to finish the proof of the proposition it only remains to check that $\cV$ is $C^\infty$ dense in $\cU$. It is well known that eigenvalues at a periodic point can be independently perturbed via a $C^\infty$ small perturbation, which means that the condition~1 is $C^\infty$ dense. To check that the condition~2 is also $C^\infty$ dense we will need the following lemma.
\begin{lemma}
\label{lemma_main}
If $\phi^t\in\cU$ and satisfies condition 1 then it admits arbitrarily $C^\infty$-small reparametrization $\phi^t_\rho$ whose stable holonomy verifies condition~2, that is, $D\,Hol^s_{r,p}(E^u(r))\nsupset F_\alpha$ for all $\alpha\in\cA$.
\end{lemma}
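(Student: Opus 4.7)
The strategy is to exploit reparametrization, which preserves the orbits of $\phi^t$ and hence the weak stable and weak unstable foliations, the periodic orbits, and (because it is a $C^1$-open property) condition~1, so that only condition~2 needs to be arranged. Set $L_0 \defin D\,Hol^s_{r,p}\colon T_r W^{0u}(r)\to T_p W^{0u}(p)$. Because stable holonomy sends orbit segments in $W^{0u}(r)$ to orbit segments in $W^{0u}(p)$, $L_0$ carries $X(r)$ to a positive multiple of $X(p)$; a short linear algebra argument then shows that $L_0(E^u(r))$ is transverse to $X(p)$ in $T_p W^{0u}(p) = \langle X(p)\rangle\oplus E^u(p)$, i.e., it is the graph of a linear map $E^u(p)\to \langle X(p)\rangle$.

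For a $C^\infty$-small reparametrization $\phi^t_\rho$, both $Hol^s_{r,p}(x')$ and the new $Hol^{s,\rho}_{r,p}(x')$ lie in the $1$-dimensional intersection $W^{0s}(x')\cap W^{0u}_{loc}(p)$, which is a flow-orbit segment since $W^{0s}$ is unchanged by reparametrization. Hence there exists a smooth function $\sigma_\rho\colon W^{0u}_{loc}(r)\to\R$, determined by $\rho$ through a Livshits-type cohomological formula along stable leaves, with $Hol^{s,\rho}_{r,p}(x') = \phi^{\sigma_\rho(x')}(Hol^s_{r,p}(x'))$. Differentiating at $r$ and transporting the result back to $p$ along the flow, the effective differential of the perturbed holonomy becomes a rank-one perturbation of $L_0$:
\[
L_\rho(v) = L_0(v) + \lambda(v)\,X(p), \qquad v\in T_r W^{0u}(r),
\]
with $\lambda \defin d\sigma_\rho(r)|_{E^u(r)} \in (E^u(r))^*$.

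Writing $L_0|_{E^u(r)} = L_0^E + \alpha\otimes X(p)$ with $L_0^E\colon E^u(r)\to E^u(p)$ an isomorphism, the $(n-2)$-dimensional subspace $L_\rho(E^u(r))$ is the graph of the linear functional $\mu_\lambda \defin (\alpha+\lambda)\circ(L_0^E)^{-1}\in (E^u(p))^*$, and $\lambda\mapsto\mu_\lambda$ is an affine isomorphism. The containment $L_\rho(E^u(r))\supset F_\alpha$ is therefore equivalent to the linear condition $\mu_\lambda|_{F_\alpha}=0$, which cuts out a subspace of codimension $\dim F_\alpha\ge 1$ in $(E^u(r))^*$. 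Since $\cA$ is finite by condition~1, the set of bad $\lambda$'s is a finite union of proper linear subspaces of $(E^u(r))^*$ and its complement is open and dense. Openness of condition~2 together with continuity of bundles and holonomies under $C^1$-small reparametrization allows us to pass to the continuations $E^u_\rho(r)$ and $F^\rho_\alpha$.

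The main technical step, which I expect to be the main obstacle, is realizability: showing that every $\lambda\in (E^u(r))^*$ in a neighborhood of $0$ arises as $d\sigma_\rho(r)|_{E^u(r)}$ for some $C^\infty$-small reparametrization. Linearizing at $\rho=1$ reduces this to surjectivity of the linear map $\psi\mapsto d\sigma_\psi(r)|_{E^u(r)}$ from $C^\infty(M)$ to the finite-dimensional space $(E^u(r))^*$. I would prove this by placing $n-2$ smooth bump perturbations $\psi_j$ with supports in disjoint small regions and differentiating the Livshits cocycle explicitly; the freedom to localize supports along the flow, together with non-degeneracy of the stable and unstable holonomies, then produces independent contributions realizing any prescribed dual basis on $E^u(r)$. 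Combining realizability with the codimension argument of the previous paragraph yields the desired arbitrarily $C^\infty$-small reparametrization $\rho$, completing the proof.
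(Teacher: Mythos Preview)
Your overall architecture coincides with the paper's: reparametrize so that weak foliations, periodic data, and condition~1 are untouched; express the perturbed holonomy derivative as a rank-one modification $L_\rho = L_0 + \lambda\otimes X(p)$; and observe that the bad set of $\lambda$'s is a finite union of proper (affine, not linear, but this is harmless) subspaces of $(E^u(r))^*$. This is exactly what the paper does.

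The gap is in your realizability sketch. You write that you would ``differentiate the Livshits cocycle explicitly'' using localized bumps, but this is precisely where the real work lies and your sketch does not touch the actual difficulty. The shift $\sigma_\rho(x')$ is an infinite series over returns of the forward orbit of $x'$ to the support of $\rho$; the orbit of any $x'$ near $r$ first visits the bump once, then lingers near the periodic orbit of $p$ for a very long time $N\sim \log(1/\|x\|)$, and only afterwards can return again. Differentiating $\sigma_\rho$ at $r$ along $E^u(r)$ therefore requires showing that the tail (all returns after the first) contributes $o(\|x\|)$. The paper carries this out as Claim~4.4: with a \emph{single} bump supported in a small ball $B$ around $f(r)$ and vanishing on $W^s_{loc}(p)$ (so that $W^s_{loc}(p)$, $E^u(r)$, and the $F_\alpha$ are literally unchanged, not merely close), one obtains
\[
D_xT^\rho_{(0,y_r)} \;=\; D_xT_{(0,y_r)} \;+\; D_x\rho_{f(0,y_r)}\,D_xf_{(0,y_r)},
\]
and since $D_x\rho$ at $f(r)$ can be any small covector, surjectivity onto $(E^u(r))^*$ is immediate. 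The tail estimate hinges on the hypotheses you never invoke: volume preservation together with $\dim E^u\ge 2$ give $\lambda\mu^\kappa=1$ with $\kappa>1$, whence the second and later returns are separated by at most $C\lambda^N \le C'\|x\|^\kappa = o(\|x\|)$. Your $n-2$ separate bumps do not avoid this issue---each one is revisited by the orbit infinitely often and needs the same estimate---so the sketch, as written, is incomplete rather than an alternative route. Once you supply this estimate, your argument and the paper's are the same.
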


Once this lemma is established $C^\infty$, density of condition~2 follows easily. Indeed, let $\Omega$ be the invariant volume for $\phi^t$, then the reparametrization $\phi^t_\rho$ preserves $\Omega_\rho$, which is close to $\Omega$ in $C^\infty$ topology ($\Omega_\rho$ is given by rescaling $\Omega$ by the inverse of the function which rescales the vector field and then normalizing to total volume 1). By employing Moser's trick we have a diffeomorphism $h$, $C^\infty$ close to $id_M$, such that $h^*\Omega_\rho=\Omega$. Let $\hat\phi_\rho^t=h^{-1}\circ \phi^t_\rho\circ h$. Then we have $\hat\phi_\rho^{t*}\Omega=\Omega$ and   $\hat\phi_\rho^t$ is close to $\phi^t$ in $C^\infty$ topology. Further, since $\hat\phi_\rho^t$ is smoothly conjugate to $\phi_\rho^t$, we still have that condition~2 holds for $\hat\phi_\rho^t$. Thus we have $C^\infty$ small perturbation $\hat\phi^t_\rho$ of $\phi^t$ which is $\cV$ and the proof of proposition is complete modulo the proof of Lemma~\ref{lemma_main}.
\end{proof}

\begin{proof}[Proof of Lemma~\ref{lemma_main}]
Consider a smooth transversal $\cT$ through the periodic point $p$ such that $W^s_{loc}(p), W^u_{loc}(p)\subset \cT$. Then locally $\phi^t$ is given as a suspension flow.
We denote by $f\colon\cT\cap f^{-1}(\cT) \to f(\cT)$ the local first return map and by $\rho_0\colon\cT\cap f^{-1}(\cT)\to\R_+$ the roof function. The reparametrizations $\phi^t_\rho$ are defined by changing the roof function from $\rho_0$ to $\rho_0+\rho$.  Fix a small ball $B\subset\cT$ which is centered at $f(r)$ and does not contain $r$ and $f^2(r)$. We impose the following conditions on $\rho$:
\begin{enumerate}
\item $\rho_0+\rho>0$;
\item $\rho(x)=0$, when $x\notin B$;
\item $\rho(x)=0$, when $x\in W^s_{loc}(p)$;
\end{enumerate}
Our goal is to find a $\rho$ which can be arbitrarily $C^\infty$ small, which implies that $\phi^t_\rho$ is arbitrarily close to $\phi^t$, and such that condition~2 holds for $\phi^t_\rho$. 

\begin{figure}[h]
  \includegraphics{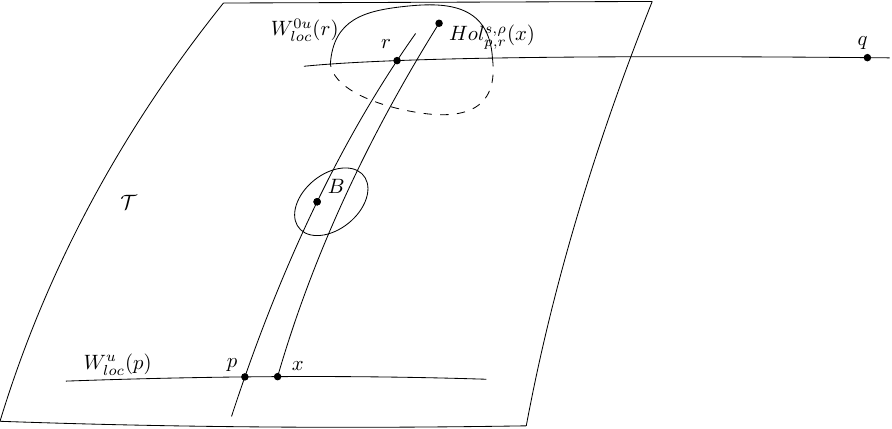}
  \caption{}
\end{figure}

We begin by making the following observations. Since $\phi^t_\rho$ is a reparametrization of $\phi^t$ the weak stable and weak unstable manifolds remain the same, while stable and unstable manifolds can change, that is adjust along the flow direction. However property~2 above implies that $W^s_{loc}(p)$ remains the same, and hence the heteroclinic point $r$ remains a heteroclinic point for $\phi^t_\rho$. Furthermore, $W^u_{loc}(q)$ and it's iterates $\phi^t(W^u_{loc}(q))$ are still unstable manifolds for as long as $\phi^t(W^u_{loc}(q))$  remains disjoint with $B$. In particular, since $r\notin B$, we have that $W^u_{loc}(r)$ and, hence, $E^u(r)$ remains the same for reparametrizations we consider. However the reparametrization affects the stable foliation near $W^s_{loc}(p)$ and hence affect the stable holonomy. We conclude that in order to verify condition 2:
$$
 D\,Hol^s_{r,p}(E^u(r))\nsupset F_\alpha,  \alpha\in\cA
 $$
 we only need to study the effect of $\rho$ on the derivative of the corresponding holonomy $D\,Hol^{s,\rho}_{r,p}\colon E^{0u}(r)\to E^{0u}(p)$ as both $E^u(r)$ and the finite collection of invariant subspaces $F_\alpha$ remain unchanged for all $\rho$ in the class of reparametrizations described above. Note that we began to use superscript $\rho$ to indicate dependence of holonomy map on the reparamentrization.
 
 In order to derive an explicit formula for  $D\,Hol^{s,\rho}_{p,r}$ we introduce a coordinate system on the neighborhood of $\cT$. These coordinates are induced by local foliations $W^{0s}_{loc}$, $W^{0u}_{loc}$ and by $\cT$. Specifically, we identify $p$ with $0$ and use $x$-coordinate on $W^{u}_{loc}(p)$, $y$-coordinate on $W^{s}_{loc}(p)$. Then we say that a point $z$ has coordinates $(x,t,y)$ if 
 $$
 z=\phi^t(W^{0s}_{loc}(x)\cap W^{0u}_{loc}(y)\cap \cT)=W^{0s}_{loc}(x)\cap W^{0u}_{loc}(y)\cap\phi^t(\cT)
 $$
 Because weak foliations are $C^1$ this coordinate system is also $C^1$. Heteroclinic point $r$ has coordinates $(0,0, y_r)$. Weak stable leaves are given by $x=const$ and hence the stable leaves for $\phi^t$ are given by graphs of functions of $y$-coordinate, that is,
 $$
 W^s_{loc}(x,0,0)=\{(x,y, T(x,y)): \, y\in W^s_{loc}(p)\}
 $$
 Similarly, for reparametrized flows $\phi^t_\rho$ we have 
$$
 W^s_{loc,\phi^t_\rho}(x,0,0)=\{(x,y, T^\rho(x,y)): \, y\in W^s_{loc}(p)\}
 $$
 Because $W^s_{loc}(p)$ does not change under reparametrization we have $T^\rho(o,y)=0$ and the holonomy $Hol_{p,r}^{s,\rho}\colon W^{0u}_{loc}(p)\to W^{0u}_{loc}(r)$ has the form
 $$
 Hol_{p,r}^{s,\rho}(x,t)=(x, t+T^\rho(x,y_r))
 $$
 Accordingly,
 $$
 D\,Hol^{s,\rho}_{p,r}=
 \begin{pmatrix}
 Id_x & 0\\
 D_x T^\rho_{(0,y_r)} & 1
 \end{pmatrix}
 $$
 where $D_x$ denotes the differentail with respect to the $x$-variable. For $\phi^t$, when $\rho=0$, the differential is given by the same expression with  $D_x T_{(0,y_r)}$ in the corner. We will derive the following formula for $D_xT^\rho_{(0,y_r)}$.
 \begin{claim}
 \label{claim_44}
 Given a reparametrization $\phi^t_\rho$, where $\rho$ satisfies conditions 1, 2 and 3 we have
 $$
  D_x T^\rho_{(0,y_r)}= D_x T_{(0,y_r)}+D_x\rho_{f(0,y_r)} D_xf_{(0,y_r)}
 $$
 \end{claim}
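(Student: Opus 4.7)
The plan is to represent $T^\rho(x,y)-T(x,y)$ as a stable cocycle built from the difference of roof functions, show that only the $k=1$ term survives under the $x$-derivative at $(0,y_r)$ by exploiting the support of $\rho$ and the geometry of the forward orbits of $p$ and $r$, and then finish by the chain rule together with $\rho|_{W^s_{loc}(p)}\equiv 0$.

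First I would use the local suspension representation: $\phi^t$ and $\phi^t_\rho$ are suspensions of the same first-return map $f\colon\cT\to\cT$ with roofs $\rho_0$ and $\rho_0+\rho$ respectively. The standard synchronization formula for strong stable leaves in a suspension expresses the $t$-coordinate of $W^s_{\phi^t_\rho}(x,0,0)$ above the chart point with coordinates $(x,y)$ as the convergent telescoping sum
\[
T^\rho(x,y)=\sum_{k=0}^\infty\Bigl[(\rho_0+\rho)(f^k(x,y))-(\rho_0+\rho)(f^k(x,0))\Bigr],
\]
and the analogous formula with $\rho\equiv 0$ gives $T(x,y)$. Subtraction yields
\[
T^\rho(x,y)-T(x,y)=\sum_{k=0}^\infty\bigl[\rho(f^k(x,y))-\rho(f^k(x,0))\bigr],
\]
convergent because $(x,y)$ and $(x,0)$ lie on the same $f$-stable leaf $\{X=x\}$ and hence their $f$-iterates contract together exponentially.

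Next I would differentiate this identity in $x$ at $(0,y_r)$ term by term. The orbit of $(0,0)=p$ under $f$ is simply $\{p\}$, and $p\notin B$ for $B$ small, so $D\rho$ vanishes there and the second subtrahend in every term is zero. For the orbit $\{f^k(r)\}_{k\ge 0}$, the hypotheses exclude $r$ and $f^2(r)$ from $B$, the tail $f^k(r)\to p\notin B$ excludes all sufficiently large $k$, and the finitely many intermediate iterates are excluded by further shrinking $B$; hence $f(r)$ is the only iterate of $r$ in $B$. Consequently
\[
\partial_x\bigl[\rho(f^k(x,y))-\rho(f^k(x,0))\bigr]_{(0,y_r)}=D\rho_{f^k(r)}\,\partial_xf^k_{(0,y_r)}-D\rho_p\,\partial_xf^k_{(0,0)}
\]
vanishes for $k\ne 1$ and for $k=1$ equals $D\rho_{f(r)}\cdot\partial_xf_{(0,y_r)}$. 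Uniform convergence of the derivative series in a neighborhood of $(0,y_r)$ is ensured by the compact support of $D\rho$ combined with stable contraction: only finitely many terms are nonzero on a sufficiently small neighborhood, which legitimizes the interchange of $\partial_x$ and $\sum_k$.

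Finally, condition~3 forces $\rho\equiv 0$ on $W^s_{loc}(p)=\{x=0\}$, so the $y$-partial derivative of $\rho$ vanishes along $W^s_{loc}(p)$; since $f(r)\in W^s_{loc}(p)$, the full differential $D\rho_{f(r)}$ reduces to its $x$-component $D_x\rho_{f(r)}$. The chain rule then gives
\[
D\rho_{f(r)}\cdot\partial_xf_{(0,y_r)}=D_x\rho_{f(r)}\cdot D_xf_{(0,y_r)},
\]
which is the claimed identity.

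The hard part will be justifying the synchronization formula for $T^\rho$ in the paper's tilted transversal (chosen so that $W^s_{loc}(p)\subset\cT$) and handling the exponential unstable expansion of $\partial_xf^k$ when differentiating term by term; compact support of $D\rho$ on the bounded orbit pieces under consideration is what ultimately controls the derivative series and makes the calculation go through.
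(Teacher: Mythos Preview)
Your telescoping representation of $T^\rho-T$ is the right starting point and coincides with the paper's, but the justification for term-by-term differentiation is where the argument breaks. You assert that ``only finitely many terms are nonzero on a sufficiently small neighborhood'' of $(0,y_r)$. This is false: for every nonzero $x$ the points $(x,0)$ and $(x,y_r)$ lie off $W^s(p)$, so their forward $f$-orbits escape the neighborhood of $p$ after roughly $N(x)\sim\log(1/\|x\|)/\log\mu$ steps and thereafter can (and generically will) return to $B$ infinitely often. Hence no neighborhood of $(0,y_r)$ makes the series finite, and since $\|D_xf^k\|\sim\mu^k$, compact support of $D\rho$ alone does not yield uniform convergence of the derivative series. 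A related problem is that in the paper's setup $f$ is only a \emph{local} first return map to $\cT$, so your infinite series $\sum_k\rho(f^k(x,y))$ is not even defined for $x\neq 0$ beyond the escape time $N(x)$.

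The paper handles exactly this difficulty, and the estimate it uses is the genuine content of the claim. Rather than differentiating term by term, it writes
\[
T^\rho(x,y_r)-T(x,y_r)=\rho(f(x,y_r))+\sum_{n\ge 2}\bigl[\rho(x_n,y_n)-\rho(x_n,\bar y_n)\bigr],
\]
where $(x_n,y_n)$ and $(x_n,\bar y_n)$ are the $n$-th \emph{flow} returns of the orbits of $(x,0,y_r)$ and $(x,0,0)$ to a neighborhood $\hat B\supset B$, and then shows directly that the tail is $o(\|x\|)$. The key point (which uses volume preservation essentially) is that the second return can only occur after the orbit has left the local chart, so $\|x\|\mu^{N}>C_1$; since $\dim E^u\ge 2$ forces $\lambda\mu^\kappa=1$ with $\kappa>1$, one gets $d((x_2,y_2),(x_2,\bar y_2))\le C\lambda^{N}\le C'\|x\|^{\kappa}$, and subsequent returns contract geometrically. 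This gives a tail of order $\|x\|^{\kappa}=o(\|x\|)$, which is precisely what is needed to conclude that $D_x(T^\rho-T)_{(0,y_r)}=D_x(\rho\circ f)_{(0,y_r)}$. Your compact-support heuristic does not supply this bound; the missing ingredient is this $\|x\|^\kappa$ estimate coming from the codimension-one volume-preserving structure.
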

 Using this formula we can finish the proof of the lemma. We have
 \begin{multline*}
  D\,Hol^{s,\rho}_{p,r}(E^u(p))=\{(v, D_x T^\rho(v)):\, v\in E^u(p)\}\\
  =\{(v, D_x T(v))+(0, D_x\rho_{f(0,y_r)}(D_xf_{(0,y_r)}(v))):\, v\in E^u(p)\}
 \end{multline*}
 Note that $D_xf_{(0,y_r)}$ is a linear isomorphism and $D_x\rho_{f(0,y_r)}$ can be any linear functional with small norm because $f(0,y_r)=f(r)$ which is in the support of $\rho$. (Recall that we didn't impose any restriction on derivatives of $\rho$ in the unstable direction, except that we need $\rho$ to be $C^\infty$ close to 0.) Hence,  from above formula we have that $D\,Hol^{s,\rho}_{p,r}(E^u(p))$ can be any codimension one linear subspace in the neighborhood of $\{(v, D_x T(v)):\, v\in E^u(p)\}=D\,Hol^{s}_{p,r}(E^u(p))$.
 Therefore, as we vary $\rho$ the spaces $D\,Hol^{s,\rho}_{p,r}(E^u(p))\cap E^u(r)$ form an open set in the Grassmannian space of codimension one subspaces of $E^u(r)$. 
 
 Because $Hol^{s,\rho}_{r,p}$ is the inverse of $Hol^{s,\rho}_{p,r}$ the matrix for $D\,Hol^{s,\rho}_{r,p}$ has the same lower-triangular structure. The space $E^u(r)$ has the form $\{(v, A(v))\}$, hence, by the same token we also have the same conclusion for $D\,Hol^{s,\rho}_{r,p}(E^u(r))$. That is, as we vary $\rho$ the spaces $D\,Hol^{s,\rho}_{r,p}(E^u(r))\cap E^u(p)$ form an open set in the Grassmannian space of codimension one subspaces of $E^u(p)$. Pick a $\rho$ such that $D\,Hol^{s,\rho}_{r,p}(E^u(r))\cap E^u(p)$ is a generic codimension one subspace of $E^u(p)$. Then it cannot contain any of the (finitely many) invariant subspace $F_\alpha\subset E^u(p)$. Indeed since this codimension one subspace is generic we have the following formula for the dimension
 $\dim(D\,Hol^{s,\rho}_{r,p}(E^u(r))\cap E^u(p)\cap F_\alpha=\dim F_\alpha-1$, which implies that it cannot contain $F_\alpha$. (Recall that $\dim F_\alpha\ge 1$.)
\end{proof}

It remains to prove the formula for the differential of the holonomy.
\begin{proof}[Proof of Claim~\ref{claim_44}]
We need to compare $T^\rho(x,y_r)$ to $T(x,y_r)$ for small $x$. The points $(x,0,0)$ and $(x, T(x,y_r), y_r)$ belong to the same local stable manifolds and are asymptotic in the future under $\phi^t$, while the points $(x,0,0)$ and $(x, T^\rho(x,y_r), y_r)$ are asymptotic in the future under $\phi^t_\rho$. The forward orbits of these two pairs of points are exactly the same except when they cross the ball $B\subset\cT$, where the reparametrization is supported. First time this happens under the first iteration of the return map $f$ since $f(x,y_r)\in B$ for small $x$. Accordingly the ``stable manifold adjusts'' so that the points remain forward asymptotic:
$$
T^\rho(x,y_r)\approx T(x,y_r)+\rho(f(x,y_r))
$$
For the next return to $B$ to happen the point should leave the neighborhood of the periodic orbit of $p$ first, that is, reach the domain where the first return map $f\colon\cT\cap f^{-1}(\cT)\to\cT$ is no longer well-defined, which will take a very long time because $x$ is small. Once the points $\phi^t(x,0,0)$ and $\phi^t(x, 0, y_r)$ leave the neighborhood of the orbit of $p$ they would be very close and will have common future returns to $B$. More precisely, let $\hat B$ be a small neighborhood of $B$ in $\cT$ and denote by $(x_n,y_n)$ $n$-th return of $\phi^t(x,0, y_r)$ to $\hat B$. (We are considering $\hat B$ instead of $B$ so that if the orbit of $(x,0, y_r)$ misses $\hat B$ then the orbit of $(x,0,0)$ misses $B$ and vice versa. It could happen that $\phi^t(x,0, y_r)$ returns to $B$ near its boundary and $\phi^t(x,0, 0)$ misses B.) Then the corresponding return of $\phi^t(x,0, 0)$ has the form $(x_n, \bar y_n)\in\cT$. The reparametrization affects the stable manifolds only at these returns by adjusting with corresponding values of $\rho$. And for $(x,0,0)$ and $(x, T^\rho(x,y_r), y_r)$ to be asymptotic in forward time we must have the following relation
$$
T^\rho(x,y_r)= T(x,y_r)+\rho(f(x,y_r))+\sum_{n\ge 2} \rho(x_n,y_n)-\rho(x_n,\bar y_n)
$$
We will now show that 
$$
\sum_{n\ge 2} \rho(x_n,y_n)-\rho(x_n,\bar y_n)=o(\|x\|)
$$
Then differentiating with respect to $x$ at $(0,y_r)$ the above relation immediately yields the formula posited in the claim.

To estimate the series we first estimate the distance between $(x_2,y_2)$ and $(x_2,\bar y_2)$. As we mentioned before, for the second return to $\hat B$ to happen the orbit needs to leave the neighborhood of the orbit of $p$. So let $N$ be the largest interger such that $f^N(x, y_r)\in\cT$ is well-defined. After the time $\approx N\textup{per}(p)$ the point could make the second return to $B$. 

Denote by $\mu>1$ the strongest expansion of $f$ along in the unstable direction and by $\lambda<1$ the weakest expansion of $f$ in the stable direction. Because $f$ is volume preserving and the unstable subbundle has dimension $\ge 2$ we have that $\lambda\mu^{\kappa}=1$ with $\kappa>1$. (By choosing $\cT$ small we can assume that $f$ is almost linear and then this relation becomes obvious, or we can employ the standard adapted metric construction to ensure that $\lambda$ and $\mu$ satisfy this relation with $\kappa>1$.) Then we have
$$ \|x\|\mu^N>C_1$$
because the forward orbit leaves $\cT$ in $N$ steps. Hence we have 
$$
d((x_2,y_2),(x_2,\bar y_2))\le d(f^N(x,y_r), f^N(x,0))\le C_2\lambda^N=\frac{C_2}{\mu^{\kappa N}}<C_3\|x\|^\kappa
$$
It remains to notice that each subsequent return to $\hat B$ will take some large uniform time and, hence, $d((x_{n+1},y_{n+1}),(x_{n+1},\bar y_{n+1}))<\beta d((x_n,y_n),(x_n,\bar y_n))$ with $\beta\in(0,1)$. We conclude 
\begin{multline*}
\left|\sum_{n\ge 2} \rho(x_n,y_n)-\rho(x_n,\bar y_n)\right|\le  |\rho|_{C^1} \sum_{n\ge 2} d((x_n,y_n), (x_n,\bar y_n))\\
\le |\rho|_{C^1} d((x_2,y_2),(x_2,\bar y_2)) \sum_{n\ge 2}\beta^{n-2}\le C  |\rho|_{C^1} \|x\|^\kappa=o(\|x\|)
\end{multline*}
which completes the proof of the claim.
\end{proof}


\begin{thebibliography}{texttLLL}

\bibitem[BG21]{BG} T. Barthelm\'e, A. Gogolev, {\it Centralizers of partially hyperbolic diffeomorphisms in dimension 3.} Discrete Contin. Dyn. Syst. 41 (2021), no. 9, 4477--4484.






\bibitem[FO73]{FO} J. Feldman, D. Ornstein, D. {\it
Semirigidity of horocycle flows over compact surfaces of variable negative curvature. }
Ergodic Theory Dynam. Systems 7 (1987), no. 1, 49-72. 







\bibitem[GRH20a]{GRH} A. Gogolev, F. Rodriguez Hertz, {\it Smooth rigidity for very non-algebraic expanding maps.}  to appear in Journal of European Math Society.

\bibitem[GRH20b]{GRH2} A. Gogolev, F. Rodriguez Hertz, {\it Abelian Livshits theorems and geometric applications}. to appear in ``Vision for Dynamics in the 21st century: the Legacy of Anatole Katok'', to be published by Cambridge University Press.

\bibitem[GRH21]{GRH3} A. Gogolev, F. Rodriguez Hertz, {\it Smooth rigidity for very non-algebraic Anosov diffeomorphisms of codimension one.} Preprint.


\bibitem[H94]{Hass} B. Hasselblatt, {\it Regularity of the Anosov splitting and of horospheric foliations.} Ergodic Theory Dynam. Systems 14 (1994), no. 4, 645--666.



\bibitem[HPS77]{HPS} M. Hirsch, C. Pugh, M. Shub, {\it Invariant manifolds.} Lecture Notes in Mathematics, Vol. 583. Springer-Verlag, Berlin-New York, 1977. ii+149 pp.

\bibitem[J88]{J} J.-L. Journ\'e, {\it A regularity lemma for functions of several variables. } Revista Matemfitica Iberoamericana (2) (1988), 187--193.





\bibitem[KH95]{KH} A. Katok, B. Hasselblatt, {\it Introduction to the modern theory of dynamical systems.} Encyclopedia of Mathematics and its Applications, 54. Cambridge University Press, Cambridge, 1995. xviii+802 pp. 






\bibitem[dlL92]{dlL} R. de la Llave, {\it Smooth conjugacy and S-R-B measures for uniformly and non-uniformly hyperbolic systems. } Comm. Math. Phys. 150 (1992), no. 2, 289--320.




\bibitem[P72]{Pl} J. Plante, {\it Anosov flows.} Amer. J. Math. 94 (1972), 729--754. 




\end{thebibliography}
\end{document}